\newtheorem{thm}{Theorem}[section]
\newtheorem{Thm}{Theorem}
\newtheorem{lem}[thm]{Lemma}
\newtheorem{pro}[thm]{Proposition}
\theoremstyle{definition}
\numberwithin{equation}{section}
\newcommand{\Z}{\ensuremath{\mathbb{Z}}}
\renewcommand{\geq}{\geqslant}
\renewcommand{\leq}{\leqslant}
\renewcommand{\mod}[1]{{\ifmmode\text{\rm\ (mod~$#1$)}\else\discretionary{}{}{\hbox{ }}\rm(mod~$#1$)\fi}}
\begin{document}

\baselineskip=17pt

\title{Large even order character sums}

\author{Leo Goldmakher}
\address{
Department of Mathematics,
University of Toronto,
40 St. George Street,
Toronto, ON,
M5S 2E4,
Canada}
\email{lgoldmak@math.toronto.edu}

\author{Youness Lamzouri}
\address{
Department of Mathematics,
University of Illinois at Urbana-Champaign,
1409 W. Green Street,
Urbana, IL,
61801
USA}
\email{lamzouri@math.uiuc.edu}

\date{}

\begin{abstract}
A classical theorem of Paley asserts the existence of an infinite family of quadratic characters whose character sums become exceptionally large. In this paper, we establish an analogous result for characters of any fixed even order. Previously our bounds were only known under the assumption of the Generalized Riemann Hypothesis.
\end{abstract}
\subjclass[2010]{Primary 11L40}

\thanks{The first author is partially supported by an NSERC Discovery Grant.}

\maketitle

\section{Introduction}

Dirichlet characters and sums involving them have a long history stretching back to Gauss. One specific quantity which has received a lot of attention during the past century is
\[
M(\chi) := \max_{t \leq q} \left|\sum_{n \leq t} \chi(n)\right| ,
\]
where $\chi\mod{q}$ is a nonprincipal Dirichlet character. Interest in this object began with the discovery (independently made by P\'{o}lya and Vinogradov in 1918) that
${M(\chi) \ll \sqrt{q} \log q}$, an upper bound which remains the strongest known outside of special cases. In 1977, Montgomery and Vaughan \cite{MV} showed that the stronger upper bound
\begin{equation}
\label{eq:MontVau}
M(\chi) \ll \sqrt{q} \log \log q
\end{equation}
follows from the Generalized Riemann Hypothesis GRH. The goal of this note is to prove (unconditionally) that (\ref{eq:MontVau}) is best-possible for characters of any fixed even order. Precisely:
\begin{Thm}
\label{MainThm}
Let $g \geq 2$ be a fixed even integer. Then there exist arbitrarily large $q$ and primitive characters $\chi\mod{q}$ of order $g$ satisfying
\begin{equation}
\label{eq:MainThm}
M(\chi) \gg_g \sqrt{q} \log \log q .
\end{equation}
\end{Thm}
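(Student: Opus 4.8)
The plan is to exploit the now-standard principle that a character sum $M(\chi)$ is large precisely when $\chi$ \emph{pretends} to be a character of small conductor and opposite symmetry. Concretely, I would fix once and for all the odd character $\chi_4 \mod{4}$ (the nonprincipal character modulo $4$) and manufacture primitive characters $\chi$ of order $g$ that agree with $\chi_4$ on every small prime. The hypothesis that $g$ is \emph{even} enters here in an essential way: the values $\pm 1$ of $\chi_4$ are $g$-th roots of unity, so an order-$g$ character is permitted to mimic $\chi_4$, whereas a character of odd order is necessarily even and cannot resonate with an odd twist. Throughout, $z$ denotes a threshold of size $z \asymp \log q$, so that $\log z \asymp \log\log q$.

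\emph{Step 1: a resonance lower bound.} Following Montgomery--Vaughan's use of P\'olya's Fourier expansion and its refinement by Granville--Soundararajan and Goldmakher, I would record that for primitive even $\chi \mod{q}$, writing $\alpha = t/q$,
\[
\sum_{n \leq t}\chi(n) = \frac{\tau(\chi)}{\pi}\sum_{1 \leq n \leq N}\frac{\bar\chi(n)}{n}\sin(2\pi n \alpha) + O\Bigl(1 + \frac{q\log q}{N}\Bigr)
\]
(with an analogous cosine expansion in the odd case). Testing the maximum over $t$ against the residues $\alpha = a/4$ and summing against $\overline{\chi_4(a)}$, the Gauss sum identity for $\chi_4$ collapses the right-hand side to $\tau(\overline{\chi_4})\sum_n \bar\chi\chi_4(n)/n$; since $|\tau(\chi)| = \sqrt q$ this produces a bound of the shape
\[
M(\chi) \gg_g \sqrt q\,\Bigl|\sum_{n \leq z}\frac{\bar\chi\chi_4(n)}{n}\Bigr| ,
\]
where in the pretentious formulation the decisive quantity depends only on the primes $p \leq z$, so that no control of the tail of $L(1,\bar\chi\chi_4)$ is required. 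If $\chi$ is constructed so that $\chi(p) = \chi_4(p)$ for all $p \leq z$, then $\bar\chi\chi_4(n) = 1$ for every $z$-smooth $n$, whence the inner sum is $\geq \sum_{n \leq z} 1/n = \log z + O(1) \gg \log\log q$, giving $M(\chi) \gg_g \sqrt q \log\log q$.

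\emph{Step 2: construction of the character.} The heart of the argument is to produce, for arbitrarily large $q$, a primitive even character $\chi \mod{q}$ of exact order $g$ with $\chi(p) = \chi_4(p)$ for all $p \leq z$. I would build $q = 4\ell_1\cdots\ell_k$ from distinct primes $\ell_i \equiv 1 \mod{g}$ (infinitely many exist by Dirichlet), fixing a character $\chi_{\ell_i} \mod{\ell_i}$ of order $g$ and the factor $\chi_4$ at the prime $2$. Writing $\chi = \chi_4\prod_i \chi_{\ell_i}^{a_i}$ with exponents $a_i \in \Z/g\Z$ and $\chi_{\ell_i}(p) = e(c_{i,p}/g)$, the requirement $\chi(p) = \chi_4(p)$ for $p \leq z$ becomes the homogeneous linear system $\sum_i a_i c_{i,p} \equiv 0 \mod{g}$, supplemented by one congruence forcing $\chi$ even. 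This is a system of about $\pi(z)$ conditions in $k$ unknowns over $\Z/g\Z$, so it has nonzero solutions once $k \gtrsim \pi(z)$; a short additional argument selects a solution with all $a_i \neq 0$ (primitivity) and with the divisibility needed for exact order $g$. Taking $k \asymp \pi(z) \asymp z/\log z$ with the $\ell_i$ the smallest admissible primes, the prime number theorem in arithmetic progressions gives $\log q \asymp k \log k \asymp z$, which is exactly the self-consistent balance making $z \asymp \log q$ and hence $\log z \asymp \log\log q$. Letting $z \to \infty$ yields arbitrarily large $q$, completing the proof.

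The main obstacle is Step 2, and within it the simultaneous fulfilment of three competing demands: matching $\chi$ to $\chi_4$ on all small primes (which costs $\asymp \pi(z)$ linear constraints and forces $q$ to carry $\gtrsim \pi(z)$ prime factors), keeping $q$ small enough that $z \asymp \log q$ — only this precise balance delivers the full $\log\log q$ rather than a smaller power — and still guaranteeing exact order $g$ and primitivity. Verifying solvability of the congruence system over the non-field $\Z/g\Z$, handling the order and primitivity bookkeeping, and tracking constants in $\log q \asymp k\log k$ is where the real work lies. By contrast, Step 1 is essentially contained in existing pretentious refinements of P\'olya--Vinogradov; the novelty — and the reason the conclusion is now \emph{unconditional} — is that the explicit construction forces $\chi$ to pretend to be $\chi_4$ along an entire initial segment of primes, so that the decisive lower bound $\sum_{n \leq z} 1/n \gg \log\log q$ holds by inspection, replacing the earlier GRH-based search for characters with large $L(1,\bar\chi\chi_4)$.
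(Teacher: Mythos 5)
There is a genuine gap, and it sits in the step you dismiss as routine. In Step 1 you resonate first and truncate second: you evaluate the P\'olya expansion at $\theta = a/4$, sum against $\overline{\chi_4}(a)$, and arrive at $\sqrt{q}\,\bigl|\sum_{n \leq N}\bar\chi\chi_4(n)/n\bigr|$ with $N$ necessarily of size about $q\log q$ (to make the error term $q\log q/N$ harmless). Your assertion that passing from $N$ down to $z \asymp \log q$ ``requires no control of the tail'' is exactly backwards: the tail $\sum_{z < n \leq N}\bar\chi\chi_4(n)/n$ is precisely the quantity Granville and Soundararajan needed GRH to control, and unconditionally the best available bound (partial summation plus P\'olya--Vinogradov) is $\ll \sqrt{q}\log q/z$, which dwarfs the hoped-for main term $\log\log q$. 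The paper's proof reverses the order of operations to evade exactly this: it keeps the maximum over \emph{all} $\theta$ of $\sum_{1\leq |n|\leq q_g}\chi_g(n)e(n\theta)/n$, truncates that maximum at $\frac12\log q$ with only $O(1)$ loss via Lemma~\ref{lem:2-2} (a nontrivial unconditional ingredient from the authors' earlier work, with no analogue in your sketch), and only \emph{then} resonates, via the Gauss-sum inequality of Lemma~\ref{lem:ExpSum} applied with flexible coefficients $a_n = \chi(n)/n$. Without some such device, your fixed-$\theta$ resonance cannot be made unconditional, and the theorem reverts to its GRH-conditional form.

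Step 2 also has concrete problems beyond the bookkeeping you flag. First, the pretension $\chi(p) = \chi_4(p)$ for all $p \leq z$ is impossible at any $p = \ell_i \leq z$, where $\chi(\ell_i) = 0$; yet taking ``the $\ell_i$ the smallest admissible primes'' with $k \asymp \pi(z)$ forces a positive proportion of them below $z$, and the surviving harmonic sum then loses the Euler factor $\prod_{\ell_i \leq z}(1 - 1/\ell_i) \asymp_g (\log z)^{-1/\phi(g)}$, degrading your lower bound to $(\log\log q)^{1 - 1/\phi(g)}$. You must insist $\ell_i > z$ (the balance $\log q \asymp z$ survives, but it needs saying). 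Second, while the homogeneous system over $\Z/g\Z$ has at least $g^{k - \pi(z)}$ solutions when $k > \pi(z)$, nothing prevents the solution subgroup from concentrating on coordinate subgroups, so ``all $a_i \neq 0$,'' exact order $g$ (i.e.\ $\gcd(a_1,\dots,a_k,g) = 1$), and --- if you instead discard the primes with $a_i = 0$ --- an unbounded conductor are all unproven, not short exercises. The paper sidesteps this entire layer: pretension is needed only for a \emph{quadratic} character, supplied by Paley's CRT-plus-reciprocity construction (Proposition~\ref{prop:ChiPointsAt1}, giving $\chi(n)=1$ for $n \leq \frac12\log q$), after which a single fixed odd character $\psi_g$ of order $g$ and bounded prime conductor (Lemma~\ref{lem:Psi}) is twisted in; Lemma~\ref{lem:ExpSum} absorbs the fixed twist at the bounded cost $\sqrt{m}/\phi(m) \asymp_g 1$, and parity, primitivity, and exact order $g$ are immediate since odd times odd is even and $\mathrm{lcm}(2,g) = g$ for even $g$. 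So your architecture is repairable in outline, but both of its pillars currently rest on claims that fail unconditionally or are materially incomplete.
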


Before describing our proof of this theorem, we briefly survey the history of lower bounds on $M(\chi)$. A classical result (apparently due to Schur, according to \cite{Da}) is that $M(\chi) \gg \sqrt{q}$ for all primitive characters $\chi\mod{q}$. However, there exist some characters for which more can be said. This was first observed by Paley \cite{Pa}, who in 1932 constructed an infinite family of quadratic characters $\chi\mod{q}$ satisfying (\ref{eq:MainThm}). No other unconditional lower bounds were proved until quite recently, when the authors \cite{GL} established that there are arbitrarily large $q$ and primitive characters $\chi\mod{q}$ of fixed odd order $g$ such that
\[
M(\chi) \gg_{g,\epsilon} \sqrt{q} (\log \log q)^{\frac{g}{\pi} \sin \frac{\pi}{g} - \epsilon} .
\]
This is presumably optimal, in view of an upper bound (conditional on the GRH) of the same form proved by the first author in \cite{Go}. \\

Assuming the GRH, Granville and Soundararajan \cite{GS} proved that Paley's omega result can be extended to characters of any fixed even order. Our proof of Theorem \ref{MainThm} removes the assumption of the GRH. The argument is inspired by that of Granville and Soundararajan, but also uses elements from Paley's paper and our own previous work, as well as some new ideas. The necessary ingredients are collected in the next section. In the final section, we put them together and prove our main theorem.

\section{Ingredients}

We first recall a few standard pieces of notation. We have already used Vinogradov's notation $f(x) \ll_a g(x)$; this simply means $|f(x)|\leq Cg(x)$, where $C$ is a constant that depends only on the subscript $a$. We will also write $f(x) \asymp g(x)$ if $f(x)\ll g(x)$ and $g(x)\ll f(x)$. The normalized complex exponential
${e(x) := e^{2\pi i x}}$ will be used frequently in our arguments. In particular, it appears in the definition of the Gauss sum for a character $\chi\mod{q}$:
\[
 \tau(\chi) := \sum_{n \leq q} \chi(n) e\Big(\frac{n}{q}\Big) .
\]
Recall that $|\tau(\chi)| = \sqrt{q}$ whenever $\chi$ is primitive.\\

One of the main tools in the proof of Theorem \ref{MainThm} is the following.
\begin{lem}
\label{lem:ExpSum}
If $\psi \mod{m}$ is a primitive Dirichlet character, then
\[
\max_{\theta\in[0,1]} \left| \sum_{n \in \Z} a_n \psi(n) e(n\theta) \right|
	\geq
		\frac{\sqrt{m}}{\phi(m)}
			\bigg| \sum_{(n,m) = 1} a_n \bigg|
\]
for any set of complex numbers $\{a_n\}$ satisfying $\sum |a_n| < \infty$.
\end{lem}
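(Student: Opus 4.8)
The plan is to exploit the multiplicative structure of $\psi$ by sampling the exponential sum only at the $m$ rational points $\theta = b/m$ and weighting these samples by $\psi(b)$. Write $S(\theta) := \sum_{n\in\Z} a_n\psi(n) e(n\theta)$ for the quantity whose maximum we wish to bound from below. Since the points $b/m$ (for $b$ ranging over residues $\bmod m$) all lie in $[0,1]$, and since $\psi(b)=0$ unless $(b,m)=1$, we have the trivial bound
\[
\Bigl|\sum_{b\bmod m}\psi(b)\, S(b/m)\Bigr| \leq \sum_{(b,m)=1}|S(b/m)| \leq \phi(m)\max_{\theta\in[0,1]}|S(\theta)| .
\]
Thus it suffices to show that the weighted sum $\sum_{b\bmod m}\psi(b)S(b/m)$ equals $\tau(\psi)\sum_{(n,m)=1} a_n$; the stated inequality then follows at once upon recalling $|\tau(\psi)| = \sqrt m$ and dividing by $\phi(m)$.

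To evaluate the weighted sum I would interchange the (absolutely convergent, by $\sum|a_n|<\infty$) order of summation,
\[
\sum_{b\bmod m}\psi(b)\, S(b/m) = \sum_{n\in\Z} a_n\,\psi(n)\sum_{b\bmod m}\psi(b)\, e(nb/m) ,
\]
and recognize the inner sum as a twisted Gauss sum. Here the hypothesis that $\psi$ is primitive is essential: separability gives $\sum_{b\bmod m}\psi(b)\, e(nb/m) = \overline{\psi(n)}\,\tau(\psi)$ for \emph{every} integer $n$, including those sharing a factor with $m$, where both sides vanish. Substituting this identity and using $\psi(n)\overline{\psi(n)} = |\psi(n)|^2 = \mathbf{1}_{(n,m)=1}$ collapses the double sum to
\[
\tau(\psi)\sum_{n\in\Z} a_n\, |\psi(n)|^2 = \tau(\psi)\sum_{(n,m)=1} a_n ,
\]
exactly as required.

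The argument has no analytic subtleties — the condition $\sum|a_n|<\infty$ is needed only to justify the rearrangement — so the real crux is the choice of the weights $\psi(b)$ at the sample points, engineered precisely so that Gauss-sum separability untwists the character and leaves behind the clean sum $\sum_{(n,m)=1}a_n$. The one point demanding care is the separability identity for $n$ not coprime to $m$: this is where primitivity genuinely enters, and without it the inner sum would fail to vanish and the main term would be corrupted. Once separability is in hand, taking absolute values and combining with the trivial upper bound displayed above completes the proof.
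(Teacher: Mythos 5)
Your proof is correct and follows essentially the same route as the paper: both evaluate the weighted sample sum $\sum_{b \bmod m} \psi(b) S(b/m)$ via the separability identity $\sum_{b \bmod m} \psi(b) e(nb/m) = \overline{\psi}(n)\tau(\psi)$ (valid for all $n$ precisely because $\psi$ is primitive), collapse it to $\tau(\psi)\sum_{(n,m)=1} a_n$, and compare with the trivial bound $\phi(m)\max_\theta |S(\theta)|$ using $|\tau(\psi)| = \sqrt{m}$. Your explicit remarks on where primitivity and absolute convergence enter are accurate and, if anything, slightly more careful than the paper's exposition.
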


\begin{proof}
Since $\psi$ is primitive, we have
\[
\begin{split}
\sum_{b \mod{m}} \psi(b) \sum_{n \in \Z} a_n \psi(n) e\Big(\frac{bn}{m}\Big)
&=
\sum_{n \in \Z} a_n \psi(n) \sum_{b \mod{m}} \psi(b) e\Big(\frac{bn}{m}\Big) \\
&=
\sum_{n \in \Z} a_n \psi(n) \overline{\psi}(n) \tau(\psi) \\
&=
\tau(\psi) \sum_{(n,m) = 1} a_n .
\end{split}
\]
It follows that
\[
\sqrt{m} \left|\sum_{(n,m)=1} a_n\right|
\leq
\sum_{b \mod{m}} \left| \sum_{n \in \Z} a_n \psi(n) e\Big(\frac{bn}{m}\Big)\right|
\leq
\phi(m) \; \max_{\theta\in[0,1]} \left| \sum_{n \in \Z} a_n \psi(n) e(n\theta) \right|
\]
as claimed.
\end{proof}

The next result was established in our earlier work on odd order character sums \cite{GL}.
\begin{lem}[Lemma 2.2 of \cite{GL}]
\label{lem:2-2}
Let $\{a(n)\}_{n\in \mathbb{Z}}$ be a sequence of complex numbers with $|a(n)|\leq 1$ for all $n$, and let $x\geq 2$ be a real number.
Then
\[
\max_{\theta\in [0,1]}\max_{1\leq N\leq x}\left|\sum_{1\leq |n|\leq N}\frac{a(n)}{n} e(n\theta)\right|
=\max_{\theta\in [0,1]}\left|\sum_{1\leq |n|\leq x}\frac{a(n)}{n} e(n\theta)\right|+O(1).
\]
\vspace*{0.1in}
\end{lem}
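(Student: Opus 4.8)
The two sides differ only through the inner maximum over $N$, so one inequality is immediate: restricting that maximum to $N=x$ shows the left-hand side is at least $\max_{\theta}\big|\sum_{1\le|n|\le x}\tfrac{a(n)}{n}e(n\theta)\big|$. Writing $S_N(\theta):=\sum_{1\le|n|\le N}\tfrac{a(n)}{n}e(n\theta)$, it therefore remains to prove the matching upper bound $|S_N(\theta)|\le \max_{\theta'}|S_x(\theta')|+O(1)$, uniformly over $\theta\in[0,1]$ and $1\le N\le x$; combined with the free inequality this forces the claimed equality (the left side lies between $\max_{\theta'}|S_x(\theta')|$ and $\max_{\theta'}|S_x(\theta')|+O(1)$).

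The naive idea of recovering the sharp truncation $S_N$ by convolving the full sum $S_x$ against the Dirichlet kernel fails, since that kernel has $L^1$ norm of size $\log N$ and would cost a logarithm rather than a constant. The key idea is instead to smooth: I would convolve $S_x$ against the Fej\'er kernel $F_N$, a nonnegative kernel with $\int_0^1 F_N=1$ whose Fourier coefficients are the triangular weights $(1-|n|/N)_+$. Because the coefficients of $S_x$ vanish past $|n|=x\ge N$, the convolution $(S_x*F_N)(\theta)$ equals exactly $\sum_{1\le|n|<N}\tfrac{a(n)}{n}e(n\theta)\big(1-\tfrac{|n|}{N}\big)$, i.e.\ the Ces\`aro mean of $S_N$.

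The point is that this Ces\`aro mean differs from $S_N$ itself by only $O(1)$, and this is precisely where the harmonic decay of the coefficients enters: the defect is $S_N(\theta)-(S_x*F_N)(\theta)=\tfrac1N\sum_{1\le|n|<N}a(n)\,\mathrm{sgn}(n)\,e(n\theta)+O(1/N)$, whose modulus is at most $\tfrac1N\sum_{1\le|n|<N}|a(n)|+O(1/N)\le 2+o(1)=O(1)$, using $|a(n)|\le1$ and the fact that there are $O(N)$ terms. Meanwhile the nonnegativity of $F_N$ gives $|(S_x*F_N)(\theta)|\le\|F_N\|_1\max_{\theta'}|S_x(\theta')|=\max_{\theta'}|S_x(\theta')|$. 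Adding the two estimates yields $|S_N(\theta)|\le\max_{\theta'}|S_x(\theta')|+O(1)$ for all admissible $N$ and $\theta$, completing the argument.

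I expect the only genuine obstacle to be conceptual rather than computational: recognizing that the sharp cutoff must be replaced by a smooth (Fej\'er) average so that the completion cost stays bounded, and then seeing that the gap between the sharp partial sum and its Ces\`aro mean is governed exactly by the hypothesis $|a(n)|\le1$ together with the $1/|n|$ weighting. The remaining steps — identifying $(S_x*F_N)$ with the triangular-weighted sum and bounding the defect — are routine.
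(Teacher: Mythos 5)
Your proof is correct and is essentially the argument behind Lemma 2.2 of \cite{GL}, which the present paper quotes without reproof: one direction is trivial upon taking $N=x$, and the other follows by comparing the sharp partial sum with its Fej\'er/Ces\`aro smoothing $(S_x*F_N)(\theta)$, whose defect is $O(1)$ precisely because the $1/n$ weights turn the triangular correction $|n|/N$ into boundedly many terms of size $O(1/N)$, while $\|F_N\|_1=1$ keeps the smoothed sum below $\max_{\theta'}|S_x(\theta')|$. The only point left implicit is the harmless normalization that $N$ may be taken to be a positive integer, since the sum over $1\leq |n|\leq N$ depends only on $\lfloor N\rfloor$.
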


Our proof of Theorem \ref{MainThm} will require fixing a character satisfying various nice properties. We isolate the construction of a suitable character in the next lemma.
\begin{lem}
\label{lem:Psi}
For any even integer $g \geq 2$, there exists an odd primitive character $\psi$ of order $g$ and prime conductor.
\end{lem}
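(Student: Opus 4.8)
The plan is to take the conductor to be a well-chosen prime $p$, so that primitivity comes for free: every nonprincipal character modulo a prime $p$ is automatically primitive, since the only proper modulus dividing $p$ is $1$ and it carries only the principal character. The characters modulo $p$ form a cyclic group of order $p-1$, so I would fix a generator $\chi$ (a character of order $p-1$) and search for $\psi$ among its powers $\chi^j$. Two elementary facts drive the construction: the order of $\chi^j$ equals $(p-1)/\gcd(j,p-1)$, and, since $\chi$ is faithful while $-1$ has order $2$ in $(\Z/p\Z)^\times$, one has $\chi(-1)=-1$, whence $\psi(-1)=\chi(-1)^j=(-1)^j$. Thus $\chi^j$ is odd precisely when $j$ is odd.

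To force $\psi=\chi^j$ to have order exactly $g$ I need $\gcd(j,p-1)=(p-1)/g$, and to force it to be odd I need $j$ odd. The natural choice is $j=d:=(p-1)/g$: then $\gcd(d,p-1)=\gcd(d,dg)=d$, so $\chi^d$ has order exactly $g$, and this $j$ is odd precisely when $d$ is odd. Conversely, writing any admissible exponent as $dk$ with $\gcd(k,g)=1$ shows that an odd character of order $g$ can exist modulo $p$ only when $d$ is odd. So everything reduces to producing a prime $p$ with $g\mid p-1$ and $(p-1)/g$ odd --- equivalently, with $v_2(p-1)=v_2(g)$.

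This last step I would handle with Dirichlet's theorem on primes in arithmetic progressions. Write $g=2^a m$ with $m$ odd and $a\ge 1$. I impose $p\equiv 1\pmod{m}$ together with $p\equiv 1+2^a\pmod{2^{a+1}}$; the latter guarantees $2^a\,\|\,(p-1)$, so that $d=(p-1)/g$ is odd, while the two together give $g\mid p-1$. Since $m$ is odd and $1+2^a$ is odd, the Chinese Remainder Theorem combines these into a single residue class modulo $2^{a+1}m$ that is coprime to the modulus, so Dirichlet supplies (infinitely many) such primes $p$. Taking any one of them and setting $\psi=\chi^{(p-1)/g}$ then yields an odd primitive character of order $g$ with prime conductor $p$.

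The one genuine subtlety --- and the step I would be most careful about --- is the $2$-adic matching $v_2(p-1)=v_2(g)$. It is \emph{not} enough to require merely $p\equiv 1\pmod{g}$: if $p-1$ carried an extra factor of $2$, then $d=(p-1)/g$ would be even and no power of $\chi$ could be simultaneously odd and of order $g$. Pinning down the exact power of $2$ dividing $p-1$ through the congruence modulo $2^{a+1}$, and verifying that the resulting class is admissible for Dirichlet's theorem, is therefore the crux; the order, parity, and primitivity computations are then routine.
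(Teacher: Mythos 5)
Your proof is correct and is essentially the paper's argument: your two congruences $p\equiv 1\pmod{m}$ and $p\equiv 1+2^a\pmod{2^{a+1}}$ glue by CRT into exactly the residue class $p\equiv g+1\pmod{2g}$ that the paper feeds to Dirichlet's theorem, and both serve the same purpose of making $(p-1)/g$ odd. Your character $\psi=\chi^{(p-1)/g}$ (with $\chi$ a generator of the character group) coincides with the paper's $\psi$ defined by $\psi(\alpha)=e(1/g)$ on a primitive root $\alpha$, with the same order, primitivity, and oddness verifications.
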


\begin{proof}
We begin by generating an appropriate conductor, which we denote $m$.
Since $g$ is even, we have $(g+1,2g) = 1$. Dirichlet's theorem implies that there exists a prime $m \equiv g+1 \mod{2g}$; note that $\frac{m-1}{g}$ is odd, a fact we shall require at the end of the proof. Let $\alpha$ denote a primitive root$\mod{m}$, and define a character $\psi\mod{m}$ by setting
$\psi(\alpha) = e\big(\frac{1}{g}\big)$ and extending by complete multiplicativity. It is clear by construction that $\psi$ is a Dirichlet character of order $g$. Moreover, since $m$ is prime, $\psi$ must be primitive. It thus remains only to check that $\psi$ is odd, a straightforward exercise:
\[
\psi(-1) = \psi(\alpha)^{(m-1)/2} = e\Big(\frac{m-1}{2g}\Big) = -1
\]
since $\frac{m-1}{g}$ is an odd integer.
\end{proof}

The final ingredient we shall require is the construction of an odd primitive quadratic character $\chi\mod{q}$ which satisfies $\chi(p)=1$ for all ``small'' primes $p$. The argument we present below follows the same lines as that given in \cite{Pa}, but is streamlined and uses more standard notation.
\begin{pro}
\label{prop:ChiPointsAt1}
There exist arbitrarily large $q$ and odd primitive quadratic characters $\chi\mod{q}$ such that
$\chi(n) = 1$ for all positive integers $n \leq \frac{1}{2} \log q$.
\end{pro}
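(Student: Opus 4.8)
The plan is to realize $\chi$ as the Kronecker symbol $\left(\frac{D}{\cdot}\right)$ attached to a negative fundamental discriminant $D$, since these are precisely the odd primitive real characters, with conductor $q=|D|$ and $\chi$ odd exactly because $D<0$. By complete multiplicativity, the desired property $\chi(n)=1$ for all $n\le\frac12\log q$ reduces to $\chi(p)=1$ for every prime $p\le\frac12\log q$. I would therefore fix a threshold $z$, arrange $\left(\frac{D}{p}\right)=1$ for all primes $p\le z$, and then locate such a $D$ with $|D|\le e^{2z}$; writing $q=|D|$ we get $\frac12\log q\le z$, and for $n\le\frac12\log q\le z$ every prime factor of $n$ is $\le z$ and coprime to $q$, so $\chi(n)=1$.

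The key observation is that, for a fixed prime $p$, the value $\left(\frac{D}{p}\right)$ depends only on $D$ modulo $p$ (modulo $8$ when $p=2$), so ``$\chi(p)=1$ for all $p\le z$'' becomes a single congruence condition on $D$. Taking $Q:=8\prod_{2<p\le z}p$ and imposing $D\equiv 1\pmod{Q}$ forces $\left(\frac{D}{p}\right)=\left(\frac{1}{p}\right)=1$ for every odd $p\le z$ and $\left(\frac{D}{2}\right)=1$ (as $D\equiv1\pmod 8$); it also yields $D\equiv1\pmod4$, so that any squarefree $D$ in this progression is automatically a fundamental discriminant and is coprime to every prime $\le z$. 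Thus I seek a negative squarefree integer $D\equiv1\pmod Q$ with $|D|\le e^{2z}$, equivalently a squarefree $q\le X:=e^{2z}$ with $q\equiv-1\pmod Q$.

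Here lies the crux, and the reason for using fundamental discriminants rather than prime moduli. Note that $Q=e^{\vartheta(z)+O(1)}=e^{z+o(z)}$ by the prime number theorem, so we are asking for an element of a progression to a modulus of size about $\sqrt{X}$. Had we insisted that $q$ be prime, we would need a \emph{prime} in such a progression; unconditionally this is governed by Linnik's theorem and yields only a constant strictly smaller than $\frac12$. The advantage of squarefree numbers is that they can be counted elementarily: the number of $q\le X$ with $q\equiv-1\pmod Q$ is $\sim X/Q$, and --- crucially --- since every prime dividing $Q$ is $\le z$, such a $q$ can fail to be squarefree only through a prime $p>z$. Hence the count of non-squarefree terms is at most $\frac{X}{Q}\sum_{p>z}p^{-2}+\pi(\sqrt X)\ll \frac{X}{Qz}+\frac{\sqrt X}{\log X}$, which is dominated by the main term $X/Q$ provided $Q\ll\sqrt X=e^{z}$, i.e. provided $\vartheta(z)\lesssim z$.

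The one genuinely delicate point, then, is that we are counting at the critical exponent (modulus comparable to the square root of the range), so the sharp constant $\frac12$ survives only for those $z$ with $\vartheta(z)\le z$. This is not a real obstruction: it is classical that $\vartheta(z)<z$ for arbitrarily large $z$ (the contribution of prime squares gives $\vartheta$ a downward bias relative to $\psi$). Selecting $z\to\infty$ along such values then produces, for each, a squarefree $D$ and hence an admissible character $\chi$ of conductor $q=|D|$ with $q$ as large as we please, completing the argument.
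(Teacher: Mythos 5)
Your proposal is correct in substance, but it takes a genuinely different route from the paper's, and it leans on one input far deeper than anything the paper uses. The paper also solves a CRT system to produce a modulus $Q \leq 4\prod_{p \leq N} p$ with prescribed quadratic residue conditions (working with the Jacobi symbol $\big(\tfrac{\cdot}{Q}\big)$ and flipping via quadratic reciprocity, rather than fixing the numerator of a Kronecker symbol as you do), but then it simply takes $\chi$ to be the primitive character \emph{inducing} $\big(\tfrac{\cdot}{Q}\big)$. This one move sidesteps your crux entirely: there is no need for the modulus to be squarefree (or prime), since passing to the inducing character can only shrink the conductor, so $N \leq q \leq Q$ and hence $\tfrac{1}{2}\log q \leq \tfrac{1}{2}\vartheta(N) + O(1) \leq N$ follows with room to spare from Chebyshev-strength bounds — the argument never sits at a critical exponent. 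By contrast, your insistence on a fundamental discriminant forces you to count squarefree values in a progression to a modulus $Q$ of size about $\sqrt{X}$, and, as you correctly diagnose, your elementary count (main term $X/Q$ against the $\pi(\sqrt{X})$ boundary error) then genuinely requires $\vartheta(z) \leq z$ along a sequence $z \to \infty$. That fact is true, but your parenthetical justification is not a proof: the downward bias $\psi(z) - \vartheta(z) \sim \sqrt{z}$ is swamped by the unconditional error term in the prime number theorem, which is only $O\big(z e^{-c\sqrt{\log z}}\big)$, so PNT plus the bias yields nothing. What rescues the step is a classical oscillation theorem — $\psi(x) - x = \Omega_{\pm}(\sqrt{x})$ (E.~Schmidt, 1903; strengthened by Littlewood) — which gives $\vartheta(z) \leq \psi(z) < z$ for arbitrarily large $z$. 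So your argument can be completed, but only by invoking an $\Omega_-$ theorem, whereas the paper's Paley-style construction is entirely elementary (CRT, quadratic reciprocity, Chebyshev). Your remark that demanding primality would lose a constant through Linnik's theorem is apt; the paper's inducing trick shows that neither primality nor squarefreeness of the modulus is needed at all.
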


\begin{proof}
For each prime $p \geq 3$, there exists a natural number $Q_p$ satisfying
\[
\bigg(\frac{Q_p}{p}\bigg) = \chi_{_{-4}}(p)
\]
where $\big(\frac{\cdot}{n}\big)$ denotes the Jacobi-Legendre symbol modulo $n$, and
$\chi_{_{-4}}$ denotes the nonprincipal character modulo $4$. Let $N$ be a large positive integer.
By the Chinese Remainder Theorem, the system of congruences
\[
\begin{split}
x &\equiv -1 \mod{8} \\
x &\equiv Q_p \mod{p} \qquad  \text{for } 3 \leq p \leq N
\end{split}
\]
has a solution $Q\leq\displaystyle 4 \prod_{p \leq N} p$.
It thus follows from the prime number theorem that
\begin{equation}
\label{eq:Qvsy}
\log Q \leq \sum_{p \leq N} \log p + O(1)\leq 2N
\end{equation}
if $N$ is sufficiently large.

Next, we apply quadratic reciprocity. Since $Q \equiv -1 \mod{4}$, we find
\[
\bigg(\frac{p}{Q}\bigg) = (-1)^{(p-1)/2}\bigg(\frac{Q}{p}\bigg)
= \chi_{_{-4}}(p)^2 = 1
\]
for all odd primes $p \leq N$. Furthermore, since $Q \equiv -1 \mod{8}$, we see that
\[
\bigg(\frac{2}{Q}\bigg) = 1  \qquad \text{and} \qquad
\bigg(\frac{-1}{Q}\bigg) = -1 .
\]
Let $\chi\mod{q}$ be the primitive character which induces $\big(\frac{\cdot}{Q}\big)$. Then $\chi$ is odd and quadratic, and ${\chi(p) = 1}$ for all primes $p \leq N$. It follows that $\chi(n)=1$ for all positive $n\leq N$, whence $q \geq N$. Since $\frac{1}{2} \log q \leq \frac{1}{2}\log Q\leq N$ by \eqref{eq:Qvsy}, we conclude.
\end{proof}

\section{Proof of Theorem \ref{MainThm}}

Fix a large number $Y$. It suffices to construct a character $\chi_g$ of order $g$ and conductor $q_g > Y$ which satisfies
\[
M(\chi_g) \gg_g \sqrt{q_g} \log \log q_g.
\]
We construct such a character in three steps.
First, Lemma \ref{lem:Psi} guarantees the existence of an odd primitive character $\psi_g$ of order $g$ whose conductor $m$ is prime. This character will be fixed throughout the argument (and is independent of our choice of $Y$), so we can write $m \asymp_g 1$. Next, Proposition \ref{prop:ChiPointsAt1} yields an odd primitive quadratic character $\chi$ of conductor $q > Y$ satisfying $\chi(n) = 1$ for all $n \leq \frac{1}{2}\log q$. Finally, let $\chi_g \mod{q_g}$ be the primitive character inducing $\chi\psi_g$. We observe that $\chi_g$ has order $g$, and that $q_g \asymp_g q$. Furthermore, $\chi_g$ is an even character. \\

Using P\'olya's fourier expansion \cite{MV} and the fact that $\chi_g$ is even, we have
\[
\begin{split}
\sum_{n\leq t}\chi_g(n)
&=
	\frac{\tau(\chi_g)}{2\pi i}
	\sum_{1\leq |n|\leq q_g} \frac{\overline{\chi_g}(n)}{n}
		\left(1-e\left(-\frac{nt}{q_g}\right)\right)
		+O_g(\log q) \\
&=
	-\frac{\tau(\chi_g)}{2\pi i}
	\sum_{1\leq |n|\leq q_g} \frac{\overline{\chi_g}(n)}{n}
		e\left(-\frac{nt}{q_g}\right)
		+O_g(\log q).
\end{split}
\]
It follows that
\begin{equation}
\label{eq:3-1}
M(\chi_g)
\geq
\frac{\sqrt{q_g}}{2\pi} \max_{\theta\in [0,1]}\left|\sum_{1\leq |n|\leq q_g}\frac{\chi_g(n)}{n}e(n\theta)\right| + O_g(\log q) .
\end{equation}
Applying Lemma \ref{lem:2-2} yields
\begin{equation}
\label{eq:3-2}
\max_{\theta\in [0,1]}
	\left|\sum_{1\leq |n|\leq q_g}\frac{\chi_g(n)}{n}e(n\theta)\right|
\geq
\max_{\theta\in [0,1]}
	\left|\sum_{1\leq |n| \leq \frac{1}{2}\log q}
		\frac{\chi(n)\psi_g(n)}{n}e(n\theta)\right|+O(1).
\end{equation}
The final step of our argument, an application of Lemma \ref{lem:ExpSum} (see below for details), is to show that
\begin{equation}
\label{eq:3-3}
\max_{\theta\in [0,1]}
	\left|\sum_{1\leq |n| \leq \frac{1}{2}\log q}
		\frac{\chi(n)\psi_g(n)}{n}e(n\theta)\right|
\geq
\frac{\sqrt{m}}{\phi(m)} \sum_{n \leq \frac{1}{2}\log q} \frac{1}{n} .
\end{equation}
Combining equations (\ref{eq:3-1}) -- (\ref{eq:3-3}) gives
\[
\begin{split}
M(\chi_g) &\geq \frac{\sqrt{m}}{2 \pi \phi(m)} \sqrt{q_g} \sum_{n \leq \frac{1}{2}\log q} \frac{1}{n}+ O_g(\sqrt{q_g}) \\
&\gg_g \sqrt{q_g} \log \log q_g
\end{split}
\]
as desired, since $m \asymp_g 1$ and $q \asymp_g q_g$. \\

To conclude the proof of Theorem \ref{MainThm} it remains only to prove the bound (\ref{eq:3-3}). From Lemma \ref{lem:ExpSum} we infer that
\[
\max_{\theta\in [0,1]}
	\left|\sum_{1\leq |n| \leq \frac{1}{2}\log q}
		\frac{\chi(n)\psi_g(n)}{n}e(n\theta)\right|
\geq
\frac{\sqrt{m}}{\phi(m)}
	\left| \sum_{\substack{1 \leq |n| \leq \frac{1}{2}\log q \\ (n,m)=1}}
		\frac{\chi(n)}{n}\right| .
\]
Since $m$ is prime, $\chi(-1) = -1$, and $\chi(n) = 1$ for all $n \leq \frac{1}{2}\log q$, we have
\[
\begin{split}
\frac{\sqrt{m}}{\phi(m)}
	\Bigg| \sum_{\substack{1 \leq |n| \leq \frac{1}{2}\log q \\ (n,m)=1}}
		\frac{\chi(n)}{n} \Bigg|
&=
\frac{2 \sqrt{m}}{\phi(m)}
	\sum_{\substack{n \leq \frac{1}{2}\log q \\ m \dnd n}} \frac{1}{n} \\
&\geq
\frac{2 \sqrt{m}}{\phi(m)} \left(1 - \frac{1}{m}\right)
	\sum_{n \leq \frac{1}{2}\log q} \frac{1}{n} \\
&\geq
\frac{\sqrt{m}}{\phi(m)} \sum_{n \leq \frac{1}{2}\log q} \frac{1}{n} .
\end{split}
\]
We thus obtain the bound (\ref{eq:3-3}), and complete the proof of Theorem \ref{MainThm}.

\noindent\\
\textbf{Acknowledgments.}
We are grateful to Jonathan Bober for pointing out a small error in an earlier draft.

\end{document}